\def\@seccntformat#1{\csname the#1\endcsname.\ } 
\newif\ifNoRemark
\def\addtheorem#1#2#3#4{
\ifthenelse{\equal{#2}{}}{}%
{\ifthenelse{\expandafter\isundefined\csname the#2\endcsname}{\newcounter{#2}}{}}
\newenvironment{#1}[1][\global\NoRemarktrue]
{\par\addvspace{2mm plus 0.5mm minus 0.2mm}\noindent 
{\bf #3}\ifthenelse{\equal{#2}{}}{}%
{\refstepcounter{#2}{\bf ~\csname the#2\endcsname}}%
{\bf \vphantom{##1}\ifNoRemark.\ \else\ (##1).\fi}\begingroup #4}%
{\endgroup\par\addvspace{1mm plus 0.5mm minus 0.2mm}\global\NoRemarkfalse}
\expandafter\newcommand\csname b#1\endcsname{\begin{#1}}
\expandafter\newcommand\csname e#1\endcsname{\end{#1}}
}
\newenvironment{proof}[1][\hspace{-1.0ex}]%
{\par\addvspace{1mm}{\sc Proof\hspace{1.0ex}{#1}.} }%
{\quad$\blacktriangle$\par\addvspace{1mm}}
\date{}
\author{D.~S.~Krotov, I.~{Yu}.~Mogilnykh, V.~N.~Potapov}
\title{To the theory of $q$-ary Steiner and other-type trades%
\thanks{This research was financed
 by the Russian Science Foundation (grant No 14-11-00555).
The authors are with the Sobolev Institute of Mathematics,
pr. Akademika Koptyuga 4, Novosibirsk 630090, Russia,
and with the Novosibirsk State University, Pirogova 2, Novosibirsk 630090, Russia.
}%
}
\begin{document}
\def\diam{\mathrm{D}}
\maketitle
\begin{abstract}
 We introduce the concept of a clique bitrade, which generalizes several known types of bitrades, including latin bitrades, Steiner $\mathrm{T}(k-1,k,v)$ bitrades, extended $1$-perfect bitrades. For a distance-regular graph, we show a one-to-one correspondence between the clique bitrades that meet the weight-distribution lower bound on the cardinality and the bipartite isometric subgraphs that are distance-regular with certain parameters. As an application of the results, we find the minimum cardinality of $q$-ary Steiner $\mathrm{T}_q(k-1,k,v)$ bitrades and show a connection of minimum such bitrades with dual polar subgraphs of the Grassmann graph $J_q(v,k)$.

 Keywords: bitrades, trades, Steiner systems, subspace designs
 \end{abstract}

\section{Introduction}
In this paper,
we prove some results on
a rather general class of combinatorial bitrades,
mainly concentrating on bitrades of minimum possible cardinality,
and obtain a partial result concerning minimum $q$-ary Steiner bitrades.

As a quick-start introduction to the terminology used in this paper,
we consider it by the example of
the Steiner triple systems, well-known in combinatorics.
Let $T$ be the set of $3$-subsets, called triples,
of a given finite set $V$ of cardinality  $v\ge 6$.
We consider a graph on the vertex set $T$,
where two triples are adjacent if and only if they intersect in two elements
(this graph is known as the Johnson graph $J(n,3)$).
In this graph, the triples that include a given $2$-subset form a maximal
clique (a set of mutually adjacent vertices).
A subset of $T$ that intersects with every such clique
in exactly one vertex is known as a Steiner triple system, or STS$(v)$, or S$(2,3,v)$.
A \emph{Steiner bitrade} (of type T$(2,3,v)$) is defined as a pair $(T_0,T_1)$ of disjoint nonempty subsets (called \emph{trades}) of $T$
such that every $2$-subset of $V$ is either included in exactly one triple from $T_0$
and exactly one triple from $T_1$ or is not included in any triple of $T_0\cup T_1$.
Equivalently, every maximum clique intersects with each of $T_0$, $T_1$ in exactly one element
or does not intersect with both of them.
Given two different STS$(v)$ $S$ and $S'$, the difference pair $( S\backslash S', S'\backslash S)$
is necessarily a Steiner bitrade; however, not every Steiner bitrade can be obtained in such a way.
The graph terminology used in this paragraph to treat the Steiner triple systems
and the Steiner bitrades allows to define similar structures for other graphs,
covering some other well-known classes of objects
(perfect codes, latin squares and hypercubes, and the corresponding bitrades,
see the examples in Section~\ref{sect:known}),
as well as classes that are studied
 intensively only over the last years
($q$-ary Steiner systems).

Bitrades (trades) are used in combinatorics,
including combinatorial design theory and combinatorial coding theory,
to study possible differences between two combinatorial
objects from the same class and to obtain new objects from a given one.
In particular, small bitrades are used to construct large classes of objects
with the same parameters, see e.g.
\cite{Vas:nongroup_perfect.en}, \cite{AssMat:66},
\cite{Aliev:72}, \cite{ZinZin:2009}, \cite{PotKro:numberQua.en},
\cite{HK:q-ary};
often minimum trades are utilized
to get a lower bound on the number of objects.
Trades embedded into ``complete'' objects
are also known under term
``switching components'' \cite{Ost:2012:switching};
but in general, bitrades (and trades)
are defined independently.
Trades can exist even if complete objects
with the corresponding parameters do not exist (for example, $v\not\equiv 1,3 \bmod 6$ for STS$(v)$).
This gives an additional motivation
to study trades and, as a result,
to develop the theory
of ``complete'' objects of given type.

One interesting and important class
of such ``complete'' objects
is the class of subspace designs,
which are also known
as $q$-ary generalizations
of classic combinatorial designs.
In particular,
if we replace in the definition of STS$(v)$ above
the set $V$ by a $v$-dimensional space over GF$(q)$
and the $t$-subsets by $t$-dimensional subspaces,
then we get the definition of $\mathrm{STS}_q(v)$,
a $q$-ary Steiner system.
It should be mentioned that the increasing interest
to subspace codes and subspace designs
in the recent years is partially motivated
by their importance for network coding applications, see e.g. \cite{KK:2008}.
Among the series of new results, we mention
\cite{BEOVW:q-Steiner}
(nontrivial $\mathrm{STS}_q(v)$ do exist)
and \cite{FLV:2014}
(nontrivial $q$-ary simple $t$-designs exist for all $t$).
While the trades, in the case of subspace designs,
were not considered independently before,
an equivalent concept (so-called $t$-equivalent sets)
was used for the construction of subspace designs \cite{BKKL:large}.
The current paper, apart from some general results,
which establish common properties of several partial kinds of trades,
contains a contribution to the theory of subspace designs.
Namely, we find the minimum possible cardinality of a $q$-ary Steiner trade
of type $\mathrm{T}_q(d-1,d,n)$.

In Section~\ref{sect:general},
we define the main notations and concepts (Subsection~\ref{sect:def}),
including the concept of a clique bitrade,
and prove four general theorems.
 Theorem~\ref{th:bitrade} (Subsection~\ref{sect:trade-criterion}) shows
that the existence of a clique bitrade
in a regular graph
is equivalent to the existence
of an eigenfunction
with certain restrictions
and to the existence
of a bipartite regular subgraph
of certain degree.
 In Subsection~\ref{sect:Delsarte},
we recall the concepts of Delsarte cliques and Delsarte pairs
and establish some useful properties of the Delsarte cliques.
As a corollary, we prove an intersecting characterization of the eigenfunctions
with the minimum eigenvalue, related to these concepts (Theorem~\ref{th:eigen}).
 In Subsection~\ref{sect:wd},
 we consider the weight-distribution lower bound on the number
of nonzeros of an eigenfunction.
 Theorem~\ref{th:min} in Subsection~\ref{sect:min} shows
that in the case of a distance-regular graph the existence of a clique bitrade meeting
the weight-distribution lower bound
is equivalent to the existence
of a bipartite regular isometric subgraph
of certain degree.
 Theorem~\ref{th:d-r} states that
the isometric subgraph mentioned above
is distance-regular.

In Section~\ref{sect:known}
we illustrate the theory by examples of bitrades
already known in the literature,
including ones
from design theory (Example~\ref{x:Johnson}),
coding theory (Example~\ref{x:halved}),
the theory of latin squares and latin hypercubes (Example~\ref{x:Hamming}).

In Section~\ref{sect:q-ary},
based on the results of Section~\ref{sect:general}
and known facts about the dual polar graphs,
we find $q$-ary Steiner $\mathrm{T}_q(d-1,d,n)$ bitrades
of minimum cardinality.

\section{General theory}\label{sect:general}
\subsection{Basic definitions}\label{sect:def}
Given a connected graph $\Gamma$,
by the \emph{distance} $\mathrm{d}_G(x,y)$
between two vertices $x$ and $y$, we mean
the length of a shortest path from $x$ to $y$.
For a graph $\Gamma=(V,E)$ and a vertex $x\in V$
or a set of vertices $x\subset V$,
$\Gamma_i(x)$
denotes
the $i$th neighborhood of $x$, that is,
the set of vertices at distance $i$ from $x$.
The \emph{diameter} $\diam(\Gamma)$ of $\Gamma$
is the maximum distance
between two vertices of $\Gamma$.

An \emph{eigenfunction} of a graph $\Gamma=(V,E)$
is a function $f:V\to \mathbb{R}$
that is not constantly zero and satisfies
\begin{equation}\label{eq:eigendef}
  \sum_{y\in \Gamma_1(x)} f(y)=\theta f(x)
\end{equation}
for all $x$ from $V$ and some constant $\theta$,
which is called an \emph{eigenvalue} of $\Gamma$.
The eigenfunctions of a graph can be treated
as the eigenvectors
of its adjacency matrix.

A set $C$ of vertices of a regular graph $\Gamma$ of degree $k$ is said to be
\emph{completely regular}
with \emph{covering radius} $\rho$
if $\Gamma_\rho(C) \ne \emptyset = \Gamma_{\rho+1}(C)$ and there is a sequence
$(b_0,\ldots,\linebreak[1]b_{\rho-1};\linebreak[2]
c_1,\ldots,\linebreak[1]c_{\rho})$,
which is named the \emph{intersection array},
such that
$\Gamma_{i+1}(C)\cap \Gamma_1(y) = b_i$
and $\Gamma_{i-1}(C)\cap \Gamma_1(y) = c_i$
hold for every
$i\in\{0,\ldots,\rho \}$,
and every $y\in \Gamma_i(C)$,
where $b_{\rho}=c_0 = 0$.
The numbers $b_0$, \ldots, $b_\rho$, $c_0$, \ldots, $c_\rho$, and
$a_0$, \ldots, $a_\rho$, where $a_i=k-b_i-c_i$, are referred to as
the \emph{intersection numbers},
and the tridiagonal matrix $(a_{i,j})_{i,j=0}^\rho$,
where $a_{i,i}=a_i$, $a_{i,i+1}=b_i$, $a_{i,i-1}=c_i$,
is called the \emph{intersection matrix} of $C$.
By the eigenvalues of a completely regular set,
we will
 mean the eigenvalues of its intersection matrix.
Given a completely regular set $C$
of covering radius $\rho$
and one of  its eigenvalues $\theta$,
by $\delta^{\theta}_C$
we denote the function on the vertex set that equals
$\nu_i$ on $\Gamma_i(C)$,
where $(1=\nu_0,\nu_1,\ldots,\nu_\rho)$
is an eigenvector
of the intersection matrix
corresponding to the eigenvalue $\theta$
(it is easy to see for a tridiagonal matrix with nonzero lower- and upper-diagonal elements
that any eigenvector is uniquely determined by its first element and the eigenvalue,
which means that for each eigenvalue there is a unique eigenvector starting with $1$;
in particular, there are $\rho+1$ different eigenvalues).
It is straightforward that $\delta^{\theta}_C$ is an eigenfunction of the graph
with the same eigenvalue $\theta$,
which proves the known fact that an eigenvalue
of a completely regular set
is necessarily an eigenvalue of the graph.

A connected graph $\Gamma$
is called \emph{distance-regular}
if every singleton is completely regular with the same intersection array
(independent on the choice of the vertex),
which is called the intersection array of $\Gamma$.

Let $\Gamma$ be a connected regular graph of degree $k$.
Assume that $S$ is a set of $(s+1)$-cliques
in $\Gamma$
such that
every edge of $\Gamma$ is included
in exactly $m$ cliques from $S$;
in this case, we
will say that the pair $(\Gamma,S)$ is a \emph{$(k,s,m)$ pair}.
A couple $(T_0,T_1)$ of mutually disjoint nonempty sets of vertices
is called an \emph{$S$-bitrade},
or a \emph{clique bitrade},
if every clique from $S$ either intersects
with each of $T_0$ and $T_1$ in exactly one vertex
or does not intersect
with both of them
(in particular, this means
that each of $T_0$, $T_1$
is an independent set in $\Gamma$).
A set of vertices $T_0$ is called an \emph{$S$-trade}
if there is another set $T_1$ (known as a \emph{mate} of $T_0$)
such that the pair $(T_0,T_1)$ is an $S$-bitrade.

\subsection{A bitrade criterion}\label{sect:trade-criterion}
We start with a criterion, which can be used as alternative definition of a clique bitrade.
\begin{theorem}\label{th:bitrade}
  Let $\Gamma$ be a regular graph of degree $k$.
  Let $(\Gamma,S)$, where $S$ is a set of cliques in $\Gamma$, be a $(k,s,m)$ pair.
  Let $T=(T_0,T_1)$ be a pair of disjoint nonempty independent sets of vertices of $\Gamma$.
  The following assertions are equivalent.

  {\rm (a)} $T$ is an $S$-bitrade.

  {\rm (b)} The function
  \begin{equation}\label{eq:eigen}
    f^T(x)=\left\{
    \begin{array}{ll}
      (-1)^i & \mbox{if $\bar x\in T_i$, $i\in\{0,1\}$} \\
      0 & \mbox{otherwise}
    \end{array}\right.
  \end{equation}
  is an eigenfunction of $\Gamma$ with eigenvalue $\theta = -k/s$.

  {\rm (c)} The subgraph $\Gamma^T$ of $\Gamma$ generated by the vertex set
      $T_0 \cup T_1$ is regular with degree $-\theta = k/s$
      (as $T_0$ and $T_1$ are independent sets, this subgraph is bipartite).
\end{theorem}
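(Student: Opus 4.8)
The plan is to establish the cycle of implications \textrm{(a)}$\Rightarrow$\textrm{(b)}$\Rightarrow$\textrm{(c)}$\Rightarrow$\textrm{(a)}. Before starting I would record one elementary counting fact: every vertex $x$ lies in exactly $km/s$ cliques of $S$. This follows by double-counting incident pairs (edge through $x$, clique of $S$ containing it): there are $k$ edges through $x$, each lying in $m$ cliques, while each $(s+1)$-clique through $x$ accounts for exactly $s$ such edges. I would also note at the outset that, since $T_0$ and $T_1$ are independent, any clique of $S$ meets each of $T_0$ and $T_1$ in at most one vertex.

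For \textrm{(a)}$\Rightarrow$\textrm{(b)}, I would evaluate $\sum_{y\in\Gamma_1(x)}f^T(y)$ for each $x$ by grouping neighbors according to the cliques through $x$. If $x\in T_0$, then $f^T(x)=1$ and, by independence of $T_0$, only the $T_1$-neighbors contribute, each by $-1$, so the sum equals $-|\Gamma_1(x)\cap T_1|$. The bitrade property forces every clique through $x$ to meet $T_1$ in exactly one vertex; double-counting pairs (clique through $x$, its $T_1$-vertex) gives $km/s$ on one side and $m\,|\Gamma_1(x)\cap T_1|$ on the other, so $|\Gamma_1(x)\cap T_1|=k/s$ and the identity $\sum=-k/s=\theta f^T(x)$ holds. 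The case $x\in T_1$ is symmetric. For $x\notin T_0\cup T_1$ the same double-counting, applied separately to $T_0$ and $T_1$, shows that $m\,|\Gamma_1(x)\cap T_0|$ and $m\,|\Gamma_1(x)\cap T_1|$ both equal the number of cliques through $x$ meeting $T_0\cup T_1$; hence the two sizes coincide and the sum is $0=\theta f^T(x)$.

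The step \textrm{(b)}$\Rightarrow$\textrm{(c)} is the easy one: I specialize the eigenfunction identity to $x\in T_0\cup T_1$. For $x\in T_0$ independence kills the $T_0$-terms, so the identity reads $-|\Gamma_1(x)\cap T_1|=-k/s$; as these are the only neighbors of $x$ in $T_0\cup T_1$, the degree of $x$ in $\Gamma^T$ is $k/s$, and symmetrically for $x\in T_1$. Thus $\Gamma^T$ is regular of degree $k/s$, bipartite with parts $T_0,T_1$.

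The crux is \textrm{(c)}$\Rightarrow$\textrm{(a)}, where a purely local degree condition must be upgraded to the global clique condition. Fixing $x\in T_0$, condition \textrm{(c)} gives exactly $k/s$ neighbors in $T_1$. I would double-count pairs (clique of $S$ through $x$, a $T_1$-vertex of that clique): each edge $xy$ with $y\in\Gamma_1(x)\cap T_1$ lies in $m$ cliques, each contributing exactly one such pair (a clique cannot hold two $T_1$-vertices), and distinct $y$ yield distinct cliques, for a total of $m\cdot k/s$ pairs; meanwhile each clique through $x$ contributes at most one pair. Since the total number of cliques through $x$ equals $km/s$, every clique through $x$ must contain exactly one vertex of $T_1$, and symmetrically every clique through a vertex of $T_1$ meets $T_0$. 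Hence a clique meeting $T_0$ meets $T_1$ in one vertex and conversely, while a clique disjoint from both parts meets neither — precisely the bitrade condition. I expect this final matching of $m\cdot k/s$ against the total clique count $km/s$, which forces every clique through a vertex of $T_0\cup T_1$ to hit the opposite part, to be the main point of the proof.
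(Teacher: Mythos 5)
Your proposal is correct and follows essentially the same route as the paper: the same cycle (a)$\Rightarrow$(b)$\Rightarrow$(c)$\Rightarrow$(a), with each step done by elementary double counting from the $(k,s,m)$ definition, and with the same key facts (each vertex lies in $km/s$ cliques of $S$, and $|\Gamma_1(x)\cap T_1|=k/s$ for $x\in T_0$). The only cosmetic difference is in (a)$\Rightarrow$(b), where you route the count through the $km/s$ cliques-per-vertex fact, while the paper instead double-counts triples $(u,t,t_1)$ and reserves the $km/s$ count for the step (c)$\Rightarrow$(a); the substance is identical.
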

\begin{proof}
All proofs are based on counting arguments,
mainly utilizing the definition
of a $(k,s,m)$ pair.

(a)$\Rightarrow$(b):
Assume $(T_0,T_1)$ is an $S$-bitrade.

At first, we show that (\ref{eq:eigendef}) holds
for every vertex $x\not\in T_0\cup T_1$.
Indeed, double-counting the number
of triples $(u,t_0,t_1)$ such that
$t_0$, $t_1$, $x\in u\in S$ and
$t_l \in \Gamma_1(x) \cap T_l$ gives
$$|\Gamma_1(x) \cap T_0|\cdot m = |\Gamma_1(x) \cap T_1|\cdot m.$$
Thus,  (\ref{eq:eigendef}) holds
with both sides being equal to zero.

It remains to prove (\ref{eq:eigendef})
for $x \in T_0$
(the case $x \in T_1$ is similar).
Double-counting the number
of triples $(u,t,t_1)$ such that
$t,t_1,x\in u\in S$,
$t \in \Gamma_1(x) \backslash T_1$ and
$t_1 \in \Gamma_1(x) \cap T_1$ gives
$$|\Gamma_1(x) \cap T_1|\cdot m \cdot (s-1)
=|\Gamma_1(x) \backslash T_1|\cdot m \cdot 1$$
(for the left side, we choose $t_1$ first
then $u$ containing $x$ and $t_1$, and finally 
 $t$ from $u\backslash \{x,t_1\}$; the right side corresponds to the order $t$, $u$, $t_1$).
This implies $|\Gamma_1(x) \cap T_1|=|\Gamma_1(x)|/s = k/s$.
Since $T_0$ is independent and therefore we have
 $|\Gamma_1(x) \cap T_0|=0$,
we find that (\ref{eq:eigendef}) turns to $\theta=\theta$.

(b)$\Rightarrow$(c) is trivial.

(c)$\Rightarrow$(a): Let $\Gamma^T$ be regular of degree $k/s$.
Let us consider some $x$ from $T_0$.
There are $km/s$ cliques from $S$ containing $x$
(as well as any other fixed vertex).
On the other hand, every
$y\in \Gamma_1(x)\cap T_1$
is in $m$ of them.
Since, by the hypothesis,
there are $k/s$ such $y$,
every clique from $S$ containing $x$
contains some $y\in \Gamma_1(x)\cap T_1$.
By the definition of an $S$-bitrade,
the claim follows.
\end{proof}
\subsection{Delsarte cliques, Delsarte pairs, and eigenfunctions}\label{sect:Delsarte}
We can say more if $\Gamma$ is distance-regular.
It is known \cite{Hoffman:1970} (see also \cite[Proposition 4.4.6]{Brouwer})
that a clique in a distance-regular graph cannot have more than $1-k/\theta_{\min}$ elements,
where $\theta_{\min}$ is the minimum eigenvalue of the graph;
the cliques of cardinality $1-k/\theta_{\min}$ are called \emph{Delsarte cliques}.
A $(k,s,m)$  pair $(\Gamma,S)$  is known as a \emph{Delsarte pair}
if the graph $\Gamma$ is distance-regular
and $S$ consists of Delsarte cliques \cite{BHK:2007}.
\begin{proposition}\label{p:mx-mn}
If, under notation and hypothesis of Theorem~\ref{th:bitrade},
{\rm(a)}--{\rm(c)} hold and, additionally,
the graph $\Gamma$ is distance-regular,
then $\theta$ is the minimum eigenvalue of $\Gamma$,
$s+1$ is the maximum order of a clique in $\Gamma$,
and $(\Gamma,S)$ is a Delsarte pair.
\end{proposition}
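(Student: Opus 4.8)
The plan is to squeeze $\theta$ between $\theta_{\min}$ and $-k/s$ from both sides, combining the eigenfunction furnished by Theorem~\ref{th:bitrade} with Hoffman's clique bound for distance-regular graphs.

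First I would apply Theorem~\ref{th:bitrade}(b): since (a)--(c) hold, $f^T$ is an eigenfunction of $\Gamma$ with eigenvalue $\theta = -k/s$. Thus $\theta$ is an actual eigenvalue of $\Gamma$, whence $\theta \geq \theta_{\min}$; and since $\theta = -k/s < 0$, this already gives $\theta_{\min} < 0$.

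Next I would obtain the reverse inequality from the Hoffman bound recalled just above: every clique in $\Gamma$ has at most $1 - k/\theta_{\min}$ vertices. A clique of $S$ has exactly $s+1$ vertices by the definition of a $(k,s,m)$ pair, so $s + 1 \leq 1 - k/\theta_{\min}$, i.e.\ $s \leq -k/\theta_{\min}$. The only point needing care is the sign: since $\theta_{\min} < 0$, multiplying through by $\theta_{\min}$ reverses the inequality and yields $\theta_{\min} \geq -k/s = \theta$.

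Putting the two bounds together forces $\theta = \theta_{\min} = -k/s$, which is the first assertion. Consequently both inequalities are equalities; in particular $s + 1 = 1 - k/\theta_{\min}$, so each clique of $S$ attains Hoffman's bound and is therefore a Delsarte clique. As $1 - k/\theta_{\min} = s + 1$ simultaneously bounds every clique order from above and is attained by the cliques of $S$, the maximum order of a clique in $\Gamma$ is exactly $s + 1$, giving the second assertion. Finally, $\Gamma$ is distance-regular and $S$ consists of Delsarte cliques, so $(\Gamma, S)$ is a Delsarte pair by definition, the third assertion. I expect no genuine obstacle here, as the whole proof is a two-sided estimate; the one thing to watch is the sign bookkeeping when dividing by the negative quantity $\theta_{\min}$.
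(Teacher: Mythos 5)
Your proof is correct and follows essentially the same route as the paper: both use Theorem~\ref{th:bitrade}(b) to see that $\theta=-k/s$ is an actual eigenvalue (hence $\theta\ge\theta_{\min}$) and Hoffman's clique bound applied to the cliques of $S$ to get the reverse inequality, forcing $\theta=\theta_{\min}$ and making every clique of $S$ a Delsarte clique. Your write-up merely makes explicit the sign bookkeeping that the paper's terser argument leaves implicit.
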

\begin{proof}
  As proved in \cite{Hoffman:1970},
any clique of order $M$ satisfies $M \le 1-k/\theta_{\min}$,
where $\theta_{\min}$ is the minimum eigenvalue of $\Gamma$.
From (b) we have $s+1 = 1-k/\theta$.
Thus, $\theta=\theta_{\min}$ and $s+1$ is the maximum order of a clique in $\Gamma$.
The pair $(\Gamma,S)$ is a Delsarte pair by the definition.
\end{proof}
We will use the following properties of the Delsarte cliques:
\begin{proposition}\label{p:Delsarte-clique}
Let $\Gamma$ be a distance-regular graph of degree $k$ and diameter $D$,
and let $\theta$ be the minimum eigenvalue of $\Gamma$.
Then

(i) every Delsarte clique is a completely-regular set of covering radius $D-1$,
and $\theta$ is not one of its eigenvalues;

(ii) the sum of any eigenfunction with eigenvalue $\theta$ over a Delsarte clique is zero;

(iii) there are positive numbers
$s_0^+$, \ldots, $s_{D-1}^+$,
$s_1^-$, \ldots, $s_{D}^-$ such that
for every vertex $x$ and every Delsarte clique $C$ at distance $i$ from $x$,
there hold $|\Gamma_i(x)\cap C |=s_i^+$ and $|\Gamma_{i+1}(x)\cap C |=s_{i+1}^-$.
\end{proposition}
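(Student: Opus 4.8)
The plan is to exploit the defining property of a Delsarte clique, namely that it has $1-k/\theta$ vertices where $\theta=\theta_{\min}$, together with standard facts about distance-regular graphs. I would organize the proof around the interplay between a Delsarte clique $C$ and an arbitrary vertex $x$, controlled by the distance $i=\mathrm{d}_G(x,C)$.

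For part (iii), I would begin by fixing $x$ and a Delsarte clique $C$, and studying the distribution of distances from $x$ to the vertices of $C$. Since $C$ is a clique, any two of its vertices are adjacent, so their distances to $x$ differ by at most one; hence the vertices of $C$ lie at only two consecutive distances, $i$ and $i+1$, from $x$, where $i=\mathrm{d}_G(x,C)$. Writing $s_i^+=|\Gamma_i(x)\cap C|$ and $s_{i+1}^-=|\Gamma_{i+1}(x)\cap C|$, I would show these counts depend only on $i$ and not on the particular choice of $x$ or $C$. The cleanest route is a double-counting argument: fix the distance $i$, count pairs $(x,C)$ together with incidences, and use the distance-regularity intersection numbers $b_i,c_i,a_i$ to pin down $s_i^+$ and $s_{i+1}^-$ by solving a small linear system. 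Positivity should follow because a Delsarte clique meets each relevant neighborhood nontrivially by the structural balance forced by $|C|=1-k/\theta$.

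For parts (i) and (ii), I would use the eigenfunction machinery already set up. Part (ii) is the key technical heart: I want to show that any eigenfunction $f$ with eigenvalue $\theta=\theta_{\min}$ sums to zero over every Delsarte clique $C$. The natural approach is to sum the eigenvalue equation \eqref{eq:eigendef} over all $x\in C$, separating the contribution of vertices inside $C$ from those outside. Inside $C$, each vertex sees all $|C|-1=-k/\theta$ other clique vertices; combining this with the overall degree $k$ and the eigenvalue relation should force $\sum_{x\in C} f(x)=0$ after rearranging. I would then derive (i) from (ii): once we know every $\theta$-eigenfunction vanishes in sum over $C$, the value $\theta$ cannot appear as an eigenvalue of the intersection matrix of $C$ (since the associated $\delta^{\theta}_C$ would be a $\theta$-eigenfunction with nonzero sum $1+\nu_1 s_1^- + \cdots$ over $C$, a contradiction). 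The covering radius claim, that it equals $D-1$, should follow from computing the intersection array of $C$ using the $s_i^{\pm}$ from (iii) and checking the last nonempty neighborhood is at distance $D-1$.

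The main obstacle I anticipate is proving part (ii) rigorously, specifically getting the summation-over-$C$ argument to close cleanly without circularity. The subtlety is that the eigenvalue equation relates $f(x)$ to its neighbors, but the neighbors of a clique vertex split into those inside $C$ and those outside, and one must control the external sum. I expect the decisive identity is that the number of external neighbors and the $\theta$-weighting conspire so that the sum over $C$ satisfies a homogeneous relation forcing it to zero precisely because $\theta$ equals the Hoffman bound value $-k/s$. Getting the bookkeeping exactly right, and confirming that the linear system in (iii) has a unique positive solution, will be where the real care is needed; the rest is assembling these pieces.
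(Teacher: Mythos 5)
Your outline has the right objects in play, but it leaves the actual content of the proposition unproven, and the two mechanisms you propose cannot be made to work as described. For (ii), the summation-over-$C$ argument is vacuous: writing $\sigma=\sum_{x\in C}f(x)$ and $E=\sum_{x\in C}\sum_{y\sim x,\,y\notin C}f(y)$, summing (\ref{eq:eigendef}) over $C$ gives exactly $(|C|-1)\sigma+E=\theta\sigma$, i.e.\ $E=(\theta-|C|+1)\sigma$. This identity holds for \emph{every} clique and \emph{every} eigenfunction; the Delsarte condition $|C|=1-k/\theta$ merely changes the coefficient of $\sigma$, so no contradiction with $\sigma\neq 0$ can come out of it, and there is no ``conspiracy'' waiting to be found at the level of a single summation. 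Since (ii) genuinely fails for general cliques, any proof must inject what is special about Delsarte cliques through a global spectral argument. The paper does this by first importing from \cite[Proposition 4.4.6, Remark]{Brouwer} that a Delsarte clique is completely regular of covering radius $D-1$; its intersection matrix then has only $D$ eigenvalues, so exactly one eigenvalue $\theta'$ of $\Gamma$ is \emph{not} an eigenvalue of $C$, and $\chi_C$, being a linear combination of $\delta^{\theta_j}_C$ over the other $D$ eigenvalues, is orthogonal to the whole $\theta'$-eigenspace. The cardinality hypothesis enters only at the last step: testing orthogonality against the eigenfunction $\delta^{\theta'}_{\{x\}}$ for $x\in C$ (which equals $1$ at $x$ and $\theta'/k$ on the $-k/\theta$ remaining clique vertices) gives $1-\theta'/\theta=0$, i.e.\ $\theta'=\theta$, which yields (ii) and the second half of (i) at once. (An alternative standard route uses positive semidefiniteness of the projection $E_\theta$ onto the $\theta$-eigenspace: $\|E_\theta\chi_C\|^2$ is proportional to $1+\theta(|C|-1)/k$, which vanishes precisely at the Hoffman bound.) Your derivation of ``$\theta$ is not an eigenvalue of $C$'' from (ii) via $\delta^{\theta}_C$ is sound in spirit, but it presupposes both (ii) and the complete regularity of $C$, neither of which your outline establishes.

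The plan for (iii) has a parallel gap: double-counting pairs $(x,C)$ at distance $i$ can only determine the \emph{total} (equivalently, the average) of $s_i^+$ over all such pairs, and distance-regularity --- unlike distance-transitivity --- gives no homogeneity of (vertex, clique) statistics, so pointwise constancy does not follow; nor does your sketch ever bring the Delsarte property to bear beyond the unexplained ``structural balance'' remark. The paper instead derives (iii) \emph{from} (ii): applying (ii) to $\delta^\theta_{\{x\}}$ when $C$ lies at distance $i$ from $x$ gives $|\Gamma_i(x)\cap C|\,\nu_i+|\Gamma_{i+1}(x)\cap C|\,\nu_{i+1}=0$, which together with $|\Gamma_i(x)\cap C|+|\Gamma_{i+1}(x)\cap C|=|C|$ and an induction showing all $\nu_i\neq 0$ pins both counts down as functions of $i$ alone. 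Finally, the completely-regular and covering-radius part of (i) cannot be recovered from (iii) as you suggest: complete regularity concerns $|\Gamma_1(y)\cap\Gamma_{j\pm 1}(C)|$ for $y\in\Gamma_j(C)$, not $|\Gamma_j(y)\cap C|$, and the paper simply cites \cite{Brouwer} for it. So the logical order of your outline should be inverted: complete regularity is the entry point, (ii) follows by the orthogonality argument, and (iii) is a consequence of (ii), not an independent counting fact.
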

\begin{proof}
It is shown in \cite[Proposition 4.4.6, Remark]{Brouwer}
that a Delsarte clique is a completely-regular set
and its covering radius $\rho$ is less than $D$;
hence, $\rho=D-1$.
The second claim of (i)
can also be retrieved
from the proof of \cite[Proposition 4.4.6]{Brouwer}
and some algebraic background,
but we will give another proof.
Let $\theta_0$, \ldots, $\theta_{D-1}$, $\theta'$
be the eigenvalues of $\Gamma$,
where $\theta'$ is not an eigenvalue of the given Delsarte clique $C$.
Then the characteristic function $\chi_C$ of $C$ is a linear combination of
$\delta^{\theta_0}_C$, \ldots, $\delta^{\theta_{D-1}}_C$
(as well as the vector $(1,0,...,0)$ is a linear combination of the eigenfunctions
of the intersection matrix of $C$).
Thus, $\chi_C$ is orthogonal to any eigenfunction with eigenvalue $\theta'$;
in other words, the sum of the values of any eigenfunction with eigenvalue $\theta'$ over $C$
is zero. Let us consider the eigenfunction $\delta^{\theta'}_{\{x\}}$ for some $x$
from $C$. We have $\delta^{\theta'}_{\{x\}}(x)=1$ and
$\delta^{\theta'}_{\{x\}}(y)=\theta'/k$
for all $y$ from $C\backslash\{x\}$.
Since, by the definition
of a Delsarte clique,
$|C\backslash\{x\}|=-k/\theta$,
we have $1+(k/\theta')(-k/\theta)=0$.
So, $\theta'=\theta$, and (i) and (ii) hold.

(iii) Let $\nu^\theta=(\nu_0,\nu_1,\ldots,\nu_D)$
be the eigenvector
of the intersection matrix of $\Gamma$
corresponding to $\theta$
and starting with $1$.
Let us consider a vertex $x$
at distance $i$ from
the given Delsarte clique $C$.
Since the sum of the values of 
$\delta^\theta_{\{x\}}$
over $C$ is zero,
we have
\begin{equation}\label{eq:GiGi1}
|\Gamma_i(x)\cap C |\nu_i+|\Gamma_{i+1}(x)\cap C |\nu_{i+1}=0.
\end{equation}
If the first summand of the left part is nonzero,
then the second one is nonzero too;
it follows by induction on $i$ that all $\nu_i$, $i=0,1,...,D$, are nonzero.
Then, (\ref{eq:GiGi1}) implies that
$|\Gamma_i(x)\cap C |/|\Gamma_{i+1}(x)\cap C |=-\nu_{i+1}/\nu_{i}$. Claim (iii) follows.
\end{proof}

As a corollary,
for the distance-regular graphs,
we can formulate
a stronger analog of
the equivalence of (a) and (b)
in Theorem~\ref{th:bitrade}:

\begin{theorem}\label{th:eigen}
 Let $(\Gamma,S)$ be a Delsarte pair and $\theta$
 be the minimum eigenvalue of $\Gamma$.
 Then

 (i) A function $f$ over the vertex set of $\Gamma$ is an eigenfunction
 with the eigenvalue $\theta$
 if and only if
 for every clique $C$ from $S$
 it holds $\sum_{x\in C} f(x)=0$.

 (ii) A proper subset $B$ of the vertex set of $\Gamma$ is
 a completely regular set of radius $1$ with eigenvalue $\theta$
 if and only if
 it has a constant number of elements in any clique from $B$.
\end{theorem}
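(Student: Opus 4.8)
The plan is to prove (i) first and then use it as the single tool behind both halves of (ii); the whole argument runs on the clique–sum condition and simple double counts.

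For (i), the "only if" direction is immediate: since $(\Gamma,S)$ is a Delsarte pair every $C\in S$ is a Delsarte clique, and Proposition~\ref{p:Delsarte-clique}(ii) says precisely that any eigenfunction with eigenvalue $\theta$ sums to zero over such a clique. For the converse I would argue by a double count centred at a single vertex. Fix $x$; as observed in the proof of Theorem~\ref{th:bitrade}, exactly $km/s$ cliques of $S$ pass through $x$, and for each the hypothesis gives $\sum_{y\in C\setminus\{x\}}f(y)=-f(x)$. Summing these $km/s$ equalities, every neighbour $y$ of $x$ is counted once for each clique of $S$ containing the edge $xy$, that is $m$ times, while non-neighbours never occur; hence $m\sum_{y\in\Gamma_1(x)}f(y)=-\tfrac{km}{s}f(x)$. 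Dividing by $m$ gives the eigenfunction equation with eigenvalue $-k/s$, which equals $\theta$ because $s+1=1-k/\theta$ for a Delsarte pair. Since a genuine eigenfunction must be nonzero, one only notes $f\not\equiv 0$.

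For (ii) I would reduce everything to (i) by exploiting that a radius-$1$ completely regular set partitions the whole vertex set as $B\sqcup\Gamma_1(B)$, together with the all-ones function $\mathbf 1$, which is an eigenfunction for the degree $k$. Writing $\chi_B$ for the characteristic function of $B$, this partition yields $\delta^\theta_B=(1-\nu_1)\chi_B+\nu_1\mathbf 1$, where $(1,\nu_1)$ is the $\theta$-eigenvector of the $2\times 2$ intersection matrix. Applying (i) to $\delta^\theta_B$ gives $(1-\nu_1)|B\cap C|+\nu_1(s+1)=0$ for every $C\in S$; as $\theta$ is the minimum eigenvalue we have $\theta\ne k$, so $\nu_1\ne 1$ and $|B\cap C|$ is forced to the constant $-\nu_1(s+1)/(1-\nu_1)$. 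This is the "only if" part.

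For the "if" part, assume $|B\cap C|\equiv t$ on all of $S$ and set $g=\chi_B-\tfrac{t}{s+1}\mathbf 1$; then $\sum_{x\in C}g(x)=t-\tfrac{t}{s+1}(s+1)=0$ for every clique, so by (i) $g$ is a $\theta$-eigenfunction (nonzero since $B$ is a nonempty proper subset). The one real obstacle is to pass from this algebraic fact back to the combinatorial definition of complete regularity. I would do so by reading off $A\chi_B=\theta\chi_B+\tfrac{t}{s+1}(k-\theta)\mathbf 1$ from $Ag=\theta g$ and $A\mathbf 1=k\mathbf 1$: evaluating at a vertex shows that each vertex outside $B$ has exactly $\tfrac{t}{s+1}(k-\theta)=tk/s$ neighbours in $B$ and each vertex of $B$ has exactly $\theta+tk/s$. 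Positivity of $tk/s$ (here $t\ge 1$, since $B\ne\emptyset$ meets some clique) forces $\Gamma_2(B)=\emptyset$, so the covering radius is exactly $1$; the two counts are the intersection numbers $c_1=tk/s$ and $b_0=k-\theta-tk/s$, constant on $\Gamma_1(B)$ and $B$ respectively. Finally, because $g$ is constant on each class $\Gamma_i(B)$, reading the eigenfunction equation on the two classes exhibits $(1-\tfrac{t}{s+1},-\tfrac{t}{s+1})$ as an eigenvector of the intersection matrix for $\theta$, so $\theta$ is indeed an eigenvalue of the completely regular set $B$, completing the equivalence.
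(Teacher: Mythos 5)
Your proof is correct and follows essentially the same route as the paper's (very terse) proof: (i) ``only if'' from Proposition~\ref{p:Delsarte-clique}(ii), (i) ``if'' by direct verification of the eigenfunction equation~(\ref{eq:eigendef}) via counting cliques through a vertex, and (ii) ``only if'' by applying claim (i) to the eigenfunction $\delta^{\theta}_B$, exactly as the paper indicates. The paper dismisses the remaining steps as ``straightforward,'' and your elaborations --- the $km/s$ clique count, the decomposition $\delta^{\theta}_B=(1-\nu_1)\chi_B+\nu_1\mathbf{1}$, and the verification that $g=\chi_B-\tfrac{t}{s+1}\mathbf{1}$ yields complete regularity with eigenvalue $\theta$ --- fill them in correctly.
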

\begin{proof}
 In (i), the ``if'' statement follows from direct checking of (\ref{eq:eigendef}),
 while ``only if'' comes from Proposition~\ref{p:Delsarte-clique}(ii).
 In (ii), again, ``if'' is straightforward, while ``only if'' follows from claim (i)
 if we consider the eigenfunction $\delta^{\theta}_B$.
\end{proof}
\subsection{The weight-distribution bound}\label{sect:wd}
The rest of Section~\ref{sect:general}
is devoted to the bitrades that are minimum in the sense that
their cardinality meets a special lower bound.
In this subsection, we define the weight distribution
and the weight-distribution bound.

By the \emph{weight distribution}
of a function
$f:V\to \mathbb{R}$
with respect to a vertex $x$
of a graph $\Gamma=(V,E)$
we will mean the sequence
$W(x)=(W^i(f))_{i=0}^{\diam(\Gamma)}$,
where $W^i(f)=\sum_{y\in \Gamma_i(x)} f(y)$.
The following fact
is well known
and easy to derive
from definitions,
by induction on $i$.
\begin{lemma}\label{l:w}
  The weight distribution $W(x)$
  of an eigenfunction $f$
  of a distance-regular graph $\Gamma$
  is calculated as
  $(f(x)W^i_{A,\theta})_{i=0}^{\diam(\Gamma)}$
  where the coefficients
  $W^i_{A,\theta}$
  are derived from
  the intersection array $A=(b_0,\ldots,c_{\diam(\Gamma)})$ of $\Gamma$ and
  the eigenvalue $\theta$
  that corresponds to $f$:
  \begin{equation}\label{eq:rekur}
   W^0_{A,\theta}=1, \quad
   W^1_{A,\theta}=\theta, \quad
   W^{i}_{A,\theta}=\left((\theta-a_{i-1})W^{i-1}_{A,\theta}-b_{i-2} W^{i-2}_{A,\theta}\right)/c_i, \quad i\ge 2.
  \end{equation}
\end{lemma}
To read more about
how to calculate
the weight distribution of eigenfunctions
and generalizations of eigenfunctions,
see \cite{Kro:struct}.
Using Lemma~\ref{l:w},
it is easy to derive
the following lower bound
on the support of an eigenfunction.
The bound is also known;
a partial case of this argument
was used in \cite{EV:94}
to find the minimum cardinality
of a switching component
of binary $1$-perfect codes
(which can also be treated
in terms of eigenfunctions).
\begin{corollary}[the weight-distribution (w.d.) bound]\label{cor:bound}
  An eigenfunction $f$
  of a dis\-tan\-ce-re\-gu\-lar graph
  has at least
  $\sum_{i=0}^{\diam(\Gamma)} |W^i_{A,\theta}|$ nonzeros,
  in notation of Lemma~\ref{l:w}.
\end{corollary}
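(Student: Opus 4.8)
The plan is to use Lemma~\ref{l:w} directly. The weight distribution of $f$ with respect to any fixed vertex $x$ is, by that lemma, the sequence $(f(x)\,W^i_{A,\theta})_{i=0}^{\diam(\Gamma)}$. The key observation is that $W^i(f)=\sum_{y\in\Gamma_i(x)}f(y)$ is a signed sum of the values of $f$ over the $i$th neighborhood of $x$, so its absolute value is bounded above by the sum of the absolute values of $f$ on that sphere. Thus for each $i$ we have $|f(x)|\cdot|W^i_{A,\theta}| = |W^i(f)| \le \sum_{y\in\Gamma_i(x)}|f(y)|$.

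\smallskip
\noindent First I would choose $x$ to be a vertex on which $f$ attains its maximum absolute value; since $f$ is an eigenfunction it is not identically zero, so $|f(x)|>0$, and I may normalize so that $|f(x)|=1$. Summing the displayed inequality over all $i$ from $0$ to $\diam(\Gamma)$ gives
\begin{equation*}
  \sum_{i=0}^{\diam(\Gamma)} |W^i_{A,\theta}|
  \;\le\; \sum_{i=0}^{\diam(\Gamma)} \sum_{y\in\Gamma_i(x)} |f(y)|
  \;=\; \sum_{y\in V} |f(y)|,
\end{equation*}
where the last equality holds because the spheres $\Gamma_i(x)$ partition the vertex set $V$ (the graph is connected with diameter $\diam(\Gamma)$). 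The right-hand side counts each nonzero value of $f$ with weight $|f(y)|\le 1$, so it is at most the number of nonzeros of $f$. Combining the two displays yields $\sum_i |W^i_{A,\theta}| \le \#\{y : f(y)\ne 0\}$, which is precisely the claimed bound.

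\smallskip
\noindent There is essentially no hard step here: the result is a one-line consequence of Lemma~\ref{l:w} together with the triangle inequality. The only point requiring a little care is the normalization choice of $x$, which is what converts the bound $|f(x)|\cdot\sum_i|W^i_{A,\theta}| \le \sum_y |f(y)|$ into a statement about the mere \emph{count} of nonzeros: picking $x$ to maximize $|f(x)|$ guarantees every term $|f(y)|$ on the right is at most $|f(x)|$, so that $\sum_y |f(y)| \le |f(x)|\cdot\#\{y:f(y)\ne 0\}$, and the factor $|f(x)|$ cancels. One should also note that all spheres $\Gamma_i(x)$ up to the diameter are nonempty in a connected distance-regular graph, so no term is spuriously included; the coefficients $W^i_{A,\theta}$ are the intrinsic ones from \eqref{eq:rekur}, independent of the choice of $x$.
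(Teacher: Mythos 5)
Your proof is correct and takes essentially the same approach as the paper's: choose a vertex $x$ maximizing $|f(x)|$ and combine Lemma~\ref{l:w} with the triangle inequality, so that each sphere $\Gamma_i(x)$ must contain at least $|W^i_{A,\theta}|$ nonzeros. The only cosmetic difference is that you sum the per-sphere inequalities into a single global inequality over the partition of $V$ into spheres, whereas the paper states the per-sphere count directly; the substance is identical.
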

\begin{proof}
  Considering the weight distribution
  with respect to a vertex $x$
  with the maximum value of $|f(x)|$,
  we see that the number of nonzeros
  in $\Gamma_i(x)$ is at least $|W^i_{A,\theta}|$.
\end{proof}

We will say that an eigenfunction
of a distance-regular graph
(and the corresponding clique bitrade, if any)
\emph{meets the w.d. bound}
if it has exactly
$\sum_{i=0}^{\diam(\Gamma)} |W^i_{A,\theta}|$ nonzeros.

\subsection{Bitrades that meet the w.d. bound}\label{sect:min}
In this section, we are focused on minimum clique bitrades in distance-regular graphs.
\begin{theorem}\label{th:min}
  Let $\Gamma$ be a distance-regular graph
  of degree $k$.
  Under notation and hypothesis
  of Theorem~\ref{th:bitrade},
  the following assertions are equivalent.

  {\rm (a')} $T$ is an $S$-bitrade
  meeting the w.d. bound.

  {\rm (b')} The function $f^T$
  is an eigenfunction of $\Gamma$
  meeting the w.d. bound
  with eigenvalue $-k/s$.

  {\rm (c')} The subgraph $\Gamma^T$
  is a regular isometric subgraph
  with degree $k/s$.
\end{theorem}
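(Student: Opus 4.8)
The plan is to take the equivalence of (a), (b), (c) from Theorem~\ref{th:bitrade} as given and to account only for the three extra attributes: ``meeting the w.d. bound'' in (a') and (b'), and ``isometric'' in (c'). Since the support of $f^T$ is exactly $T_0\cup T_1$, the function $f^T$ has the same number of nonzeros as the cardinality $|T_0|+|T_1|$ of the bitrade, so (a')$\Leftrightarrow$(b') is nothing more than (a)$\Leftrightarrow$(b) together with the definition of meeting the bound. Also (c')$\Rightarrow$(c) trivially, so by Theorem~\ref{th:bitrade} each of (a'), (b'), (c') entails that $T$ is a bitrade, $f^T$ an eigenfunction with $\theta=-k/s$, and $\Gamma^T$ regular of degree $k/s$; moreover Proposition~\ref{p:mx-mn} lets me assume throughout that $(\Gamma,S)$ is a Delsarte pair and $\theta$ is the minimum eigenvalue. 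It then remains to prove (b')$\Leftrightarrow$(c').

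First I would record the sign pattern of the weight-distribution coefficients. Writing $\nu=(\nu_0,\dots,\nu_{\diam(\Gamma)})$ for the eigenvector of the intersection matrix corresponding to $\theta$ and starting with $1$, one has $W^i_{A,\theta}=|\Gamma_i(x)|\,\nu_i$ (compare the weight distribution of $\delta^\theta_{\{x\}}$ computed two ways, via its definition and via Lemma~\ref{l:w}). By Proposition~\ref{p:Delsarte-clique}(iii) the ratios $\nu_{i+1}/\nu_i$ are all negative, so, since $\nu_0=1$, the signs alternate and $\mathrm{sign}(W^i_{A,\theta})=(-1)^i$. Next I would reformulate ``$f^T$ meets the bound'' combinatorially. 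For any support vertex $x$ (all of which are maximum-modulus vertices, as $|f^T|\in\{0,1\}$), the number $N_i(x)$ of nonzeros of $f^T$ in $\Gamma_i(x)$ satisfies $N_i(x)=\sum_{y\in\Gamma_i(x)}|f^T(y)|\ge |W^i_{A,\theta}|$, with equality precisely when the nonzero values in $\Gamma_i(x)$ share a common sign (the triangle-inequality equality condition behind Corollary~\ref{cor:bound}). As the total number of nonzeros is independent of $x$, the global equality forces $N_i(x)=|W^i_{A,\theta}|$ for every support vertex $x$ and every $i$ simultaneously. Using $\mathrm{sign}(W^i_{A,\theta})=(-1)^i$, the common sign in $\Gamma_i(x)$ equals $f^T(x)(-1)^i$, so meeting the bound is equivalent to the \emph{parity condition} $f^T(u)f^T(w)=(-1)^{\mathrm{d}_\Gamma(u,w)}$ for all $u,w\in T_0\cup T_1$; that is, $u$ and $w$ lie in the same part of $\{T_0,T_1\}$ if and only if $\mathrm{d}_\Gamma(u,w)$ is even.

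Finally I would match this parity condition with isometry. Since $\Gamma^T$ is bipartite with parts $T_0,T_1$, its distances have parity equal to ``different part,'' so if $\Gamma^T$ is isometric then $\mathrm{d}_\Gamma=\mathrm{d}_{\Gamma^T}$ immediately gives the parity condition; this handles (c')$\Rightarrow$(b'). For the converse I would prove $\mathrm{d}_{\Gamma^T}(u,w)=\mathrm{d}_\Gamma(u,w)$ by induction on $i=\mathrm{d}_\Gamma(u,w)$, the inequality $\mathrm{d}_{\Gamma^T}\ge\mathrm{d}_\Gamma$ being automatic. Given $i\ge1$, I would pick a neighbor $z$ of $w$ with $\mathrm{d}_\Gamma(u,z)=i-1$, a clique $C\in S$ through the edge $wz$ (there are $m\ge1$ of them), and the unique vertex $w_C$ of $C$ lying in the part opposite to $w$ (it exists because $C$ meets the bitrade, hence each part exactly once, and it is a $\Gamma^T$-neighbor of $w$). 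As $C$ is a clique containing $z$, every vertex of $C$ is within distance $i$ of $u$, while $\mathrm{d}_\Gamma(u,C)=i-1$ (as $z$ gives $\le i-1$ while $w\in C$ at distance $i$ prevents anything closer); hence $\mathrm{d}_\Gamma(u,w_C)\in\{i-1,i\}$. The parity condition then forces $\mathrm{d}_\Gamma(u,w_C)$ to have parity opposite to that of $i$ (because $w_C$ lies in the part opposite to $w$), hence $\mathrm{d}_\Gamma(u,w_C)=i-1$. Thus $w$ has a $\Gamma^T$-neighbor one step closer to $u$, and the induction produces a $\Gamma^T$-geodesic of length $i$, so $\Gamma^T$ is connected and isometric. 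I expect this last step — producing, inside $\Gamma^T$, a neighbor of $w$ at distance $i-1$ from $u$ — to be the main obstacle, since it is where the clique/bitrade structure and the parity information must be combined; the rest is bookkeeping with the weight-distribution bound.
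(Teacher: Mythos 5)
Your proof is correct and follows essentially the same route as the paper: (a$'$)$\Leftrightarrow$(b$'$) is definitional, (c$'$)$\Rightarrow$(b$'$) uses isometry plus bipartiteness of $\Gamma^T$ to get sign-coherence of $f^T$ on each sphere $\Gamma_i(x)$, and (b$'$)$\Rightarrow$(c$'$) is the same induction on $\mathrm{d}_\Gamma(u,w)$ that takes a clique of $S$ through an edge of a geodesic and uses the unique opposite-part vertex of that clique as a $\Gamma^T$-neighbor one step closer. The only real difference is that you derive the full parity condition via the sign alternation $\mathrm{sign}(W^i_{A,\theta})=(-1)^i$ (invoking Proposition~\ref{p:Delsarte-clique}(iii)), whereas the paper's proof gets by with the weaker fact that $f^T$ has constant sign on each sphere, ruling out the opposite-part vertex $z$ from $\Gamma_i(x)$ simply because that sphere already contains a vertex of $T_0$.
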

\begin{proof}
  (a')$\Leftrightarrow$(b')
  is straightforward from
  (a)$\Leftrightarrow$(b)
  of Theorem~\ref{th:bitrade}
  and the definition
  of the concept
  ``to meet the w.d. bound.''

  (c')$\Rightarrow$(b').
  Assume (c') holds.
  Consider some $x$ from $T_0$.
  By the isometry property,
  $$\Gamma_i(x) \cap (T_0 \cup T_1) =
  \Gamma^T_i(x) \cap (T_0 \cup T_1) =
  \Gamma^T_i(x) \cap T_{i\bmod 2}$$
  (the last equality holds because $\Gamma^T$
  is bipartite with parts $T_0$ and $T_1$).
  It follows that $f^T$
  is either non-negative
  or non-positive on $\Gamma_i(x)$ and
  $|W^i(f^T)| = |\Gamma_i(x) \cap (T_0 \cup T_1)|$.
  Thus, $|T_0 \cup T_1| = \sum_{i=0}^{\diam(\Gamma)}|W^i(f^T)|$,
  and $f^T$ meets the w.d. bound.

  (a',b')$\Rightarrow$(c').
  Assume (b') holds.
  Then for every $x\in T_0 \cup T_1$ and for every $i$,
  the function $f^T$
  is either non-negative,
  or non-positive on $\Gamma_i(x)$.
  That is,
  $\Gamma_i(x)$ does not intersect
  with either $T_0$ or $T_1$.
  Let us prove by induction on
  $\mathrm{d}_{\Gamma}(x,y)$ that
  $$\mathrm{d}_{\Gamma^T}(x,y)=\mathrm{d}_{\Gamma}(x,y)$$
  for every $x$, $y\in T_0 \cup T_1$.
  For $\mathrm{d}_{\Gamma}(x,y)=0$, this is trivial.
  Let $x$, $y\in T_0$ and $\mathrm{d}_{\Gamma}(x,y)=i$
  (the case when $x$, or $y$,
  or both belong to $T_1$ is similar).
  There is a vertex $v$ in $\Gamma_{i-1}(x)\cap \Gamma_1(y)$.
  A clique from $S$ that contains both $v$ and $x$
  has a vertex $z$ from $T_1$.
  All the vertices
  of this clique
  lie in $\Gamma_{i-1}(x) \cup \Gamma_{i}(x)$.
  But $z$ cannot belong to $\Gamma_{i}(x)$
  as $\Gamma_{i}(x)$ already contains
  a vertex from $T_0$.
  Hence, $z\in \Gamma_{i-1}(x)$.
  By the induction hypothesis,
  $\mathrm{d}_{\Gamma^T}(x,z)=i-1$.
  Therefore, $\mathrm{d}_{\Gamma^T}(x,y)=(i-1)+1=\mathrm{d}_{\Gamma}(x,y)$,
  which proves the statement.
\end{proof}
\begin{theorem}\label{th:d-r}
Assume that, under the notation and the hypothesis
of Theorem~\ref{th:min},
{\rm (a')}--{\rm (c')} hold.
Then the graph $\Gamma^T$ is distance-regular.
Moreover, for every vertex $x$ of $\Gamma^T$ and every $i$
it holds $|\Gamma^T_i(x)|=|W^i_{A,\theta}|$
where $W^i_{A,\theta}$ satisfies (\ref{eq:rekur})
with the intersection numbers of $\Gamma$.
\end{theorem}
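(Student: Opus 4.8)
The plan is to establish the two assertions separately: the cardinality formula will fall out almost immediately from the proof of Theorem~\ref{th:min}, whereas distance-regularity will require a clique double-counting argument resting on Proposition~\ref{p:Delsarte-clique}(iii).

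First I would dispose of the cardinality formula. Fix a vertex $x\in T_0\cup T_1$. By the isometry established in Theorem~\ref{th:min} (which in particular makes $\Gamma^T$ connected), $\Gamma^T_i(x)=\Gamma_i(x)\cap(T_0\cup T_1)$, and, as shown there, $f^T$ has constant sign $\pm1$ on this set and vanishes off $T_0\cup T_1$; hence $|W^i(f^T)|=|\Gamma^T_i(x)|$. On the other hand, Lemma~\ref{l:w} gives $W^i(f^T)=f^T(x)\,W^i_{A,\theta}$ with $|f^T(x)|=1$, so $|\Gamma^T_i(x)|=|W^i_{A,\theta}|$ for every $i$. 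This already shows that the distance distribution of $\Gamma^T$ is the same from every vertex, but distance-regularity is strictly stronger and is where the real work lies.

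For distance-regularity I would fix $x,y\in T_0\cup T_1$ with $\mathrm{d}_{\Gamma^T}(x,y)=\mathrm{d}_\Gamma(x,y)=i$ and show that the numbers $c_i^T=|\Gamma^T_{i-1}(x)\cap\Gamma^T_1(y)|$ and $b_i^T=|\Gamma^T_{i+1}(x)\cap\Gamma^T_1(y)|$ depend only on $i$. Because $\Gamma^T$ is bipartite with parts $T_0,T_1$ and isometric, the part containing a vertex is determined by the parity of its $\Gamma$-distance to $x$; consequently every $\Gamma^T$-neighbor of $y$ lies in the part opposite to $y$, hence at $\Gamma$-distance $i-1$ or $i+1$ from $x$ (a neighbor at distance $i$ would share $y$'s part, impossible since the parts are independent). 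Thus $c_i^T+b_i^T=k/s$, and it suffices to determine $c_i^T$. To this end I would count the cliques of $S$ through $y$ that meet $\Gamma_{i-1}(x)$. Since $y\in T_0\cup T_1$, the bitrade property forces each clique through $y$ to contain exactly one vertex of the part opposite to $y$; for a clique meeting $\Gamma_{i-1}(x)$ this opposite-part vertex lies in $\Gamma_{i-1}(x)$ by the parity argument, and the edge from $y$ to each of the $c_i^T$ such neighbors lies in $m$ cliques, giving $m\,c_i^T$ cliques. Counting the same cliques through incidences $(w,C)$ with $w\in\Gamma_{i-1}(x)\cap\Gamma_1(y)$ and $\{w,y\}\subseteq C\in S$: there are $c_i$ choices of $w$ (an intersection number of $\Gamma$), each edge $\{w,y\}$ lies in $m$ cliques, while each clique at distance $i-1$ from $x$ contains exactly $s_{i-1}^+$ vertices of $\Gamma_{i-1}(x)$ by Proposition~\ref{p:Delsarte-clique}(iii); hence their number is $c_i\,m/s_{i-1}^+$. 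Equating the two counts yields $c_i^T=c_i/s_{i-1}^+$ and $b_i^T=k/s-c_i/s_{i-1}^+$, both independent of $x$ and $y$. Therefore every singleton of $\Gamma^T$ is completely regular with one and the same intersection array, that is, $\Gamma^T$ is distance-regular.

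The main obstacle I anticipate is the bookkeeping in that last double count: one must check that a clique containing $y$ and a vertex of $\Gamma_{i-1}(x)$ is necessarily at distance $i-1$ from $x$ (so that Proposition~\ref{p:Delsarte-clique}(iii) applies with the constant $s_{i-1}^+$), that the correspondence between such cliques and distance-$(i-1)$ neighbors of $y$ is exactly $m$-to-one, and that $s_{i-1}^+>0$ so the division is legitimate. All three rest on the facts that a clique spans only two consecutive distance classes from $x$ and that each clique through $y$ has a single opposite-part vertex, together with Proposition~\ref{p:Delsarte-clique}; the remaining manipulations are routine.
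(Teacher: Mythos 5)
Your proof is correct and takes essentially the same approach as the paper: the cardinality formula via Theorem~\ref{th:min}(b') together with the argument of Corollary~\ref{cor:bound}, and distance-regularity via a clique double-count through $y$ using the bitrade property, the sign/parity property from the w.d.\ bound, and the constants of Proposition~\ref{p:Delsarte-clique}(iii). The only cosmetic difference is that you compute $c_i^T=c_i/s_{i-1}^+$ and deduce $b_i^T=k/s-c_i^T$ from bipartiteness, whereas the paper runs the symmetric count directly to get $b_i^T=b_i/s_{i+1}^-$ and obtains $c_i^T$ ``similarly.''
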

\begin{proof}
Consider vertices $x$ and $y$ of $\Gamma^T$ at distance $i$ from each other.
Without loss of generality we assume $y\in T_0$.
We know that the cardinality of $\Gamma_{i+1}(x)\cap \Gamma_1(y)$ is $b_i$,
the corresponding intersection number of $\Gamma$.
Every vertex from this set is in $m$ cliques of $S$ containing $y$.
On the other hand, every such clique contains exactly $s_{i+1}^-$ vertices
of $\Gamma_{i+1}(x)\cap \Gamma_1(y)$. So, the number of cliques containing $y$
and intersecting with $\Gamma_{i+1}(x)$ is $mb_i/s_{i+1}^-$.
Each such clique
contains one element from $T_1$,
and this element lies in $\Gamma_{i+1}(x)$,
because the considered bitrade meets the w.d. bound.
On the other hand, every such element belongs to $m$ cliques containing $y$.
So, $|\Gamma_{i+1}(x)\cap \Gamma_1(y) \cap T_1| = (mb_i/s_{i+1}^-)/m = b_i/s_{i+1}^-$.
By the isometry property, we have $|\Gamma^T_{i+1}(x)\cap \Gamma^T_1(y)| = b_i/s_{i+1}^-$.
Similarly, $|\Gamma^T_{i-1}(x)\cap \Gamma^T_1(y)| = c_i/s_{i-1}^+$,
and the graph $\Gamma^T$ is distance-regular by the definition.

The last statement of the theorem follows from Theorem~\ref{th:min}(b')
and the proof of Corollary~\ref{cor:bound}.
\end{proof}

\begin{corollary}
 For every distance-regular graph $\Gamma$ admitting a Delsarte pair,
there is a sequence
$A'=(b'_0,\ldots,\linebreak[1]b'_{\diam(\Gamma)-1};\linebreak[2]c'_1,\ldots,c'_{\diam(\Gamma)})$
such that the existence
of a clique bitrade meeting the w.d. bound in $\Gamma$
is equivalent to the existence
of an isometric distance-regular
subgraph with intersection array $A'$.
\end{corollary}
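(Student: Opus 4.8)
The plan is to take for $A'$ exactly the intersection array produced inside the proof of Theorem~\ref{th:d-r}. That proof shows that whenever a bitrade $T$ meeting the w.d. bound exists, $\Gamma^T$ is distance-regular with $b'_i=b_i/s^-_{i+1}$ and $c'_i=c_i/s^+_{i-1}$, where the $b_i,c_i$ are the intersection numbers of $\Gamma$ and the $s^+_i,s^-_{i+1}$ are the Delsarte-clique parameters of Proposition~\ref{p:Delsarte-clique}(iii). Since the $b_i$, $c_i$ and the $s^\pm$ are invariants of $\Gamma$ alone (the latter being determined by the Delsarte cliques, independently of the choice of $T$ or even of $S$), these formulas single out one sequence attached to $\Gamma$, and I would simply declare it to be $A'$.

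The forward implication is then immediate. If $\Gamma$ carries a clique bitrade $T$ meeting the w.d. bound, Theorem~\ref{th:min}, (a')$\Rightarrow$(c'), makes $\Gamma^T$ a regular isometric subgraph of degree $k/s$, and Theorem~\ref{th:d-r} upgrades this to an isometric distance-regular subgraph whose array is precisely the $A'$ just defined.

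For the converse I would start from an isometric distance-regular subgraph $H$ with array $A'$ and reconstruct a bitrade. The one nontrivial point is that $H$ must be bipartite, so that its vertex set splits into the two independent parts required by the setup of Theorem~\ref{th:bitrade}; equivalently, I must verify $a'_i=b'_0-b'_i-c'_i=0$ for every $i$. This is the step I expect to be the real work, and I would prove it directly from the data of $\Gamma$: writing $\nu=(\nu_0,\dots,\nu_D)$ for the $\theta$-eigenvector of the intersection matrix of $\Gamma$, relation (\ref{eq:GiGi1}) together with $s^+_j+s^-_{j+1}=s+1$ expresses $s^-_{i+1}$ and $s^+_{i-1}$ through consecutive entries $\nu_j$, and substituting these into $b'_i+c'_i$ and invoking the three-term recurrence $c_i\nu_{i-1}+a_i\nu_i+b_i\nu_{i+1}=\theta\nu_i$ collapses the sum to $(k-\theta)/(s+1)=k/s$. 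Hence $b'_i+c'_i=b'_0=k/s$, so every distance-regular graph with array $A'$ is bipartite, and in particular $A'$ is a genuine bipartite intersection array.

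With bipartiteness in hand the converse closes quickly. Taking $T_0,T_1$ to be the two colour classes of $H$ (both nonempty, as $H$ is connected of positive degree $k/s$), they are independent; reading \emph{isometric subgraph} as an isometrically embedded \emph{induced} subgraph — the sense in which $\Gamma^T$ is used throughout Theorems~\ref{th:bitrade} and~\ref{th:min} — forces $H=\Gamma^T$ for $T=(T_0,T_1)$, so $\Gamma^T$ is a regular isometric subgraph of degree $b'_0=k/s$. Theorem~\ref{th:min}, (c')$\Rightarrow$(a'), then yields that $T$ is an $S$-bitrade meeting the w.d. bound, giving the equivalence. Besides the identity $a'_i=0$, the only delicate issue is this induced-subgraph reading of ``isometric subgraph''; without it one cannot guarantee that the parts of $H$ are independent in $\Gamma$.
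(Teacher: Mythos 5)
Your proof is correct, and it follows the only natural route: in the paper this corollary is stated \emph{without any proof}, as an immediate consequence of Theorems~\ref{th:min} and~\ref{th:d-r}, with $A'$ given by $b'_i=b_i/s^-_{i+1}$, $c'_i=c_i/s^+_{i-1}$ exactly as you define it. What you add beyond the paper is the verification that the converse direction actually closes: before Theorem~\ref{th:min}, (c')$\Rightarrow$(a'), can be applied to an isometric distance-regular subgraph $H$ with array $A'$, one must know that $H$ is bipartite, and this is precisely the step the paper leaves implicit (it is clear only \emph{a posteriori}, once one knows that $A'$ is realized by some bipartite $\Gamma^T$). Your computation supplies it intrinsically: from $s^+_i\nu_i+s^-_{i+1}\nu_{i+1}=0$ and $s^+_i+s^-_{i+1}=s+1$ one gets $b'_i=b_i(\nu_i-\nu_{i+1})/\bigl((s+1)\nu_i\bigr)$ and $c'_i=c_i(\nu_i-\nu_{i-1})/\bigl((s+1)\nu_i\bigr)$, and the recurrence $c_i\nu_{i-1}+a_i\nu_i+b_i\nu_{i+1}=\theta\nu_i$ indeed gives $b'_i+c'_i=(k-\theta)/(s+1)=k/s=b'_0$, hence $a'_i=0$ for all $i$; I checked this and it is right (note $\nu_i\ne\nu_{i+1}$ since consecutive $\nu$'s have opposite signs, so the divisions are legitimate). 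One small simplification: the ``delicate issue'' you flag at the end is automatic. If $x,y\in V(H)$ are adjacent in $\Gamma$, then $\mathrm{d}_\Gamma(x,y)=1$, and isometry forces $\mathrm{d}_H(x,y)=1$, i.e., the edge lies in $H$; hence every isometric subgraph is induced, so the two colour classes of $H$ are independent in $\Gamma$ for free, and $H=\Gamma^T$ without any extra convention.
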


In the following sections,
we will consider examples of such subgraph.
\section{Known examples}\label{sect:known}
By a \emph{clique design},
we  mean a set of vertices
that has exactly one vertex in common
with each clique of $S$,
given a Delsarte pair $(\Gamma,S)$.
The difference couple 
$(D_1\backslash D_2,D_2\backslash D_1)$
of two different clique designs
is always a clique bitrade,
while the existence of a clique trade
does not imply
the existence of a clique design
in the same graph.
In this section,
we will consider classes
of distance-regular graphs
for which the theory
of clique designs and clique trades,
in different notations,
is more-or-less developed
in the corresponding
areas of mathematics.

\begin{example}\label{x:octa}
 We start with a very simple example,
when the graph is an $n$-dimensional octahedron,
a regular graph with $2n$ vertices of degree $2n-2$.
There are $2^n$ maximum cliques of cardinality $n$;
a clique design consists of two non-adjacent vertices;
a minimum bitrade corresponds to a square subgraph.
A less trivial problem is to characterize all
$(n,m)$ systems of cliques (for different $m$).
One can find that such systems are
in one-to-one correspondence with
the Boolean functions with $4m$ ones
whose correlation immunity \cite{Siegenthaler:84}
is at least $2$.
\end{example}

\begin{example}\label{x:Hamming}
 The vertex set of the
\emph{Hamming graph} $H(n,q)$
is the set $\{0,\ldots,q-1\}^n$
of words of length $n$
over the alphabet $\{0,\ldots,q-1\}$.
The graph $H(n,2)$
is also known as the \emph{$n$-cube},
or the \emph{hypercube} of dimension $n$.
Two words are adjacent whenever they differ
in exactly one position.
The clique designs in Hamming graphs
are known as the \emph{latin hypercubes}
(in coding theory, these objects are known as the \emph{distance-$2$ MDS codes}),
and the clique bitrades,
as the \emph{latin bitrades}
\cite{Potapov:2013:trade}.
The most studied case,
which corresponds to the latin squares,
is $n=3$, see e.g. \cite{Cav:rev}.
The graph corresponding to a minimum bitrade
is $H(n,2)$ \cite{Potapov:2013:trade}.
\end{example}

\begin{example}\label{x:Johnson}
 The vertices of the
\emph{Johnson graph} $J(n,w)$
are the $w$-subsets of a given set $N$ of cardinality $n$.
Two different vertices are adjacent whenever they intersect
in $w-1$ elements.
The graphs $J(n,w)$ and $J(n,n-w)$ are isomorphic,
and below we assume $2w\le n$.
A \emph{Steiner $\mathrm{S}(w-1,w,n)$ system} $S$ is defined
as a set of vertices of $J(n,w)$, usually called \emph{blocks},
such that every $(w-1)$-subset of $N$ is included in exactly one block from $S$
(see e.g. \cite{ColMat:Steiner}).
It is easy to see that the set of
$w$-subsets of $N$ that include a given $(w-1)$-subset
is a maximum clique in $J(n,w)$.
So, the Steiner $\mathrm{S}(w-1,w,n)$ systems are the clique designs in $J(n,w)$.
The clique bitrades in $J(n,w)$ are known as the
\emph{Steiner $\mathrm{T}(w-1,w,n)$ bitrades},
(in an alternative terminology, Steiner $\mathrm{T}(w-1,w,n)$ trades)
see e.g. \cite{HedKho:trades}.
Any minimum bitrade has the form
\begin{eqnarray*}
\left(\left\{\{a_1^{b_1},\ldots,a_w^{b_w}\} \mid b_1,\ldots,b_w \in\{0,1\}, b_1+\ldots+b_w \equiv 0 \bmod 2 \right\},\right.\\
 \left.\left\{\{a_1^{b_1},\ldots,a_w^{b_w}\} \mid b_1,\ldots,b_w \in\{0,1\}, b_1+\ldots+b_w \equiv 1 \bmod 2 \right\}\right),
\end{eqnarray*}
where $a_1^0,\ldots,a_w^0,a_1^1,\ldots,a_w^1$ are distinct elements of $N$.
The corresponding subgraph is $H(w,2)$.
The minimum bitrade cardinality was found in \cite{Hwang:86}.
In the case $w=3$,
the minimum trade is known
as the Pasch configuration, or the quadrilateral.
\end{example}


\begin{example}\label{x:halved}
 The vertices of the
\emph{halved $n$-cube}
are the even-weight binary words of length $n$
(i.e., a part of the bipartite $n$-cube).
Two words are adjacent whenever they differ
in exactly two positions.
A maximum clique is the set of
binary $n$-words adjacent in $H(n,2)$
to a fixed odd-weight word;
such clique is Delsarte if and only if $n$ is even.
The clique designs in halved $n$-cubes
are the \emph{extended $1$-perfect codes}.
Such codes exist if and only if $n$ is a power of two,
see e.g. \cite{MWS}.
The minimum cardinality $2^{n/2}$ of
a bitrade was found in \cite{EV:94}
(the authors considered a special type of $1$-perfect trades,
but the argument works for the general case;
the $1$-perfect trades in $H(n-1,2)$
are in one-to-one correspondence with
the extended $1$-perfect trades in the halved $n$-cube).
An example of a minimum clique bitrade is
$\{(x,x)\mid x\in \{0,1\}^{n/2}\}$.
The graph corresponding to a minimum bitrade
is $H(n/2,2)$.
\end{example}

\begin{example}\label{x:folded}
If for every vertex $x$ of a distance-regular graph $\Gamma$,
there is exactly one vertex $y$ at distance $d=\diam(\Gamma)$ from $x$,
then identifying all such pairs $x$, $y$ results in
a distance-regular graph of diameter $\lfloor d/2 \rfloor$,
known as the \emph{folded} $\Gamma$.
It is not difficult to see that
the bipartite isometric subgraph $\Gamma^T$
corresponding to a minimum clique bitrade
will be also folded under this operation.
However, the folded $\Gamma^T$ is bipartite if and only if $d$ is odd;
this reflects the fact that the minimum eigenvalue of $\Gamma$ is an eigenvalue of the folded $\Gamma$
if and only if $d$ is even.
Examples are the folded $J(2d,d)$ and the folded halved $H(2d,2)$,
where the corresponding subgraph is the folded $H(d,2)$, $d$ is even.
\end{example}

The next example shows that analogs of the clique bitrades can be considered
even if the graph has no cliques of required cardinality.
\begin{example}\label{x:doob}
The Shrikhande graph can be defined
on the $16$ quaternary pairs from $\mathbb{Z}_4^2$,
where two pairs are adjacent if and only if their element-wise difference
is one of $(0,1)$, $(0,3)$, $(1,0)$, $(3,0)$, $(1,1)$, $(3,3)$.
The Doob graph $D(m,n)$ is the Cartesian product
of $m>0$ copies of the Shrikhande graph
and $n$ copies of the complete graph on $4$ vertices.
This graph is distance regular with the same intersection array as
the Hamming graph $H(2m+n,4)$.
It follows that it has the same minimum eigenvalue $\theta =-2m-n$,
and the w.d. bound on the number of nonzeros of an eigenfunction is the same too,
i.e., $2^{2m+n}$, for $\theta$.
However, the Doob graph does not admit a Delsarte pair;
moreover, Delsarte cliques,
which have cardinality $4$, does not occur in $D(m,0)$.
So, we cannot apply the definition of a clique design.
Nevertheless, we can apply an alternative definition using Theorem~\ref{th:bitrade}(b):
let us say that a pair of two disjoint independent vertex sets is a pseudo-clique bitrade
if the difference of their characteristic functions is an eigenfunction with minimum eigenvalue.
An example of a minimum bitrade is $\{(0,0),(0,1),(0,2),(0,3)\}^m\{0,1\}^n$;
it is not difficult to find that the subgraph generated by any minimum bitrade is $H(2n+m,2)$.
In a same manner, a pseudo-clique design can be defined as an independent completely
regular set with minimum eigenvalue and covering radius $1$.
Such sets are the maximum independent sets
in the Doob graph \cite{Kro:2015:N-MDS-Doob};
we leave constructing an example as an exercise.
\end{example}
From the last 
 example, we see that defining bitrades in terms of eigenfunctions 
 is a more
general approach than in terms of Delsarte cliques. In a similar manner, bitrades
with other eigenvalues can be defined. For example, the bitrades with eigenvalue $-1$
($1$-perfect bitrades) are studied in the theory of $1$-perfect codes, see e.g. \cite{VorKro:2014en}.

\begin{remark}
 One can weaken the notion of a \emph{clique design}
 by defining it as a set
 that intersects with every clique from $S$
 in a constant $\lambda$ number of elements,
 not necessarily $\lambda=1$.
 In spite of weakening the definition, a clique design is still a completely regular set,
 but it is not an independent set if $\lambda>1$.
 In the partial cases corresponding to the considered examples
 (as well as to the case considered in the next section),
 such designs are also studied in the literature.
\end{remark}

\section{Minimum $q$-ary Steiner bitrades}\label{sect:q-ary}
\def\gauss#1#2{\left[#1 \atop #2\right]_q}
Let $F^n_q$ be an $n$-dimensional
vector space over the Galois field $F_q$ of prime-power order $q$.
The \emph{Grassmann graph} $J_q(n,d)$
is defined as follows.
The vertices are the $d$-dimensional subspaces of $F^n_q$.
Two vertices are adjacent whenever they
intersect in a $(d-1)$-dimensional subspace.
The {Grassmann graph} is a distance-transitive graph of degree
$q\gauss{d}{1}\gauss{n-d}{1}$,
where $\gauss ab=\prod_{i=0}^{b-1}\frac{q^{a-i}-1}{q^{i+1}-1} $
see e.g. \cite[Theorem~9.3.3]{Brouwer}.

All vertices that include
a fixed $(d-1)$-dimensional subspace
form a clique of order
$M=\gauss{n-d+1}1$ in $J_q(n,d)$;
if $n\geq 2d$ then this clique is maximum.
We form an $(M,1)$ system $S$
from all cliques that correspond
to a $(d-1)$-dimensional subspace.
A set of vertices that intersects 
 with every clique from $S$
in exactly one vertex is known as a
\emph{$q$-ary Steiner $\mathrm{S}_q(d-1,d,n)$ system}.
Constructing $q$-ary Steiner $\mathrm{S}_q(d-1,d,n)$
systems with $d\ge 3$ is not easy;
at the moment,
only the existence of $\mathrm{S}_2(2,3,13)$
is known in this field \cite{BEOVW:q-Steiner}.
An $S$-bitrade in $J_q(n,d)$ is called
a \emph{Steiner $\mathrm{T}_q(d-1,d,n)$ bitrade}.

Before formulating the main theorem of this section, we briefly introduce the dual polar graph $D_d(q)$ (see, e.g., \cite{Brouwer}), which plays the role of the bipartite subgraph $\Gamma^T$ for $\Gamma=J_q(n,d)$.
Note that the class of dual polar graphs contains several other subclasses \cite[\S 9.4]{Brouwer}, which are not considered here,
but the graphs of type $D_d(q)$ are the only dual polar graphs that are bipartite.

A quadratic form
$Q:F^n_q\rightarrow F_q$
is said to be  \emph{nondegenerate}
if its kernel
$\{x \mid Q(y+x)=Q(y)  \forall y\in F^n_q\}$ is zero.
A subspace $V$ of
$F^n_q$ is called \emph{totally isotropic}
whenever the form vanishes
completely on $V$, i.e., $Q(V)=\{0\}$.
The maximum dimension of a totally
isotropic subspace is known as
the \emph{Witt index} of $Q$.
If $n=2d$, then the maximum
Witt index of a nondegenerate quadratic form is equal to $d$.
There exists a unique
(up to isomorphism)
nondegenerate quadratic form with the Witt index $d$.
One of its representations is
$Q_0(v_1,\dots,v_d,u_1,\dots u_d)=v_1u_1+\dots+v_du_d$.
The \emph{dual polar graph} $D_d(q)$
has as vertices the $d$-dimensional totally isotropic subspaces,
with respect to $Q_0$;
two vertices $\alpha$ and $\beta$
are adjacent whenever
$\dim (\alpha\cap\beta)= d-1$.

\begin{theorem}\label{th:q-trade}
 The minimum cardinality
 of a \emph{Steiner $\mathrm{T}_q(d-1,d,n\ge 2d)$ bitrade}
 is
\begin{equation}\label{eq:q-trade}
 \prod\limits_{i=1}^{d}(q^{d-i}+1)=\sum\limits_{i=0}^{d}q^{i\choose
2}\gauss{d}{i},
\end{equation}
 which is also equal to the value of the w.d. bound.

 The bipartite distance-regular subgraph of $J_q(n,d)$
 generated by a Steiner $\mathrm{T}_q(d-1,d,n)$ bitrade 
  has the parameters of the dual polar graph $D_d(q)$.
\end{theorem}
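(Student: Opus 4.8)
The plan is to realize the dual polar graph $D_d(q)$ explicitly as the subgraph generated by a concrete bitrade, and then to read off both the cardinality and the parameters from the machinery of Section~\ref{sect:general}. First I would fix the constants. The cliques of $S$ arise from the $(d-1)$-dimensional subspaces, so they have order $M=\gauss{n-d+1}{1}=s+1$; a short computation gives $s=q\gauss{n-d}{1}$, and since $J_q(n,d)$ has degree $k=q\gauss{d}{1}\gauss{n-d}{1}$, the bitrade eigenvalue of Theorem~\ref{th:bitrade} is $\theta=-k/s=-\gauss{d}{1}$, which is the minimum eigenvalue of the Grassmann graph. Since $M=1-k/\theta$, the members of $S$ meet Hoffman's clique bound, so $(J_q(n,d),S)$ is a Delsarte pair; this also follows a posteriori from Proposition~\ref{p:mx-mn} once a bitrade has been produced. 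By Theorems~\ref{th:bitrade} and \ref{th:min}, a minimum Steiner $\mathrm{T}_q(d-1,d,n)$ bitrade is exactly a bipartite regular isometric subgraph of $J_q(n,d)$ of degree $-\theta=\gauss{d}{1}$, and its cardinality equals the w.d.\ bound $\sum_{i=0}^{d}|W^i_{A,\theta}|$.

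The construction step embeds $F_q^{2d}$ into $F_q^n$ (possible since $n\ge 2d$) and equips it with the hyperbolic form $Q_0$. I take $T_0$ and $T_1$ to be the two families of maximal ($d$-dimensional) totally isotropic subspaces; these are vertices of $J_q(n,d)$, and the induced subgraph is by definition $D_d(q)$. I then verify the hypotheses of Theorem~\ref{th:min}(c'): the two families are disjoint and nonempty; each is independent in $J_q(n,d)$ because adjacent totally isotropic generators lie in opposite families, so $D_d(q)$ is bipartite with parts $T_0$, $T_1$; the graph $D_d(q)$ is regular of degree $\gauss{d}{1}=-\theta$; and the embedding is isometric, since the distance between two $d$-subspaces equals $d-\dim(\alpha\cap\beta)$ both in $J_q(n,d)$ and in the dual polar metric. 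Hence $(T_0,T_1)$ is a clique bitrade meeting the w.d.\ bound.

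Finally I would harvest the consequences. Because the bound is met, the minimum cardinality equals $|T_0\cup T_1|$, the number of generators of a hyperbolic quadric, namely $\prod_{i=1}^{d}(q^{d-i}+1)$; its equality with $\sum_{i=0}^{d}q^{i\choose 2}\gauss{d}{i}$ is the $q$-binomial (Cauchy) identity evaluated at $1$. The lower bound over all bitrades is then automatic: by Corollary~\ref{cor:bound} every bitrade, being an eigenfunction with eigenvalue $\theta$, has at least $\sum_{i}|W^i_{A,\theta}|$ nonzeros, and the construction shows this common value is $\prod_{i=1}^{d}(q^{d-i}+1)$. For the structural claim, Theorem~\ref{th:d-r} shows that every minimum bitrade generates a distance-regular subgraph whose intersection numbers $b_i/s^-_{i+1}$ and $c_i/s^+_{i-1}$ depend only on $J_q(n,d)$; as the constructed example realizes exactly $D_d(q)$, these numbers are precisely the parameters of $D_d(q)$, hence so is the parameter set of every minimum bitrade.

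The main obstacle is the construction step: one must import (or re-derive) the standard structure of $D_d(q)$ --- that the maximal totally isotropic subspaces split into two adjacency-alternating families, that the resulting graph is regular of degree $\gauss{d}{1}$, and that its graph metric coincides with $d-\dim(\alpha\cap\beta)$ --- so as to check independence, the correct degree, and the isometry that feeds Theorem~\ref{th:min}(c'). A more computational alternative handles the numerics without dual polar input by solving the recurrence (\ref{eq:rekur}) for $J_q(n,d)$ at $\theta=-\gauss{d}{1}$, obtaining $|W^i_{A,\theta}|=q^{i\choose 2}\gauss{d}{i}$ directly; but identifying the extremal subgraph with $D_d(q)$ still requires the geometric construction above.
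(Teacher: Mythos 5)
Your proposal is correct and takes essentially the same route as the paper: both proofs realize the dual polar graph $D_d(q)$ as a bipartite regular isometric subgraph of $J_q(n,d)$ of degree $k/s=(q^d-1)/(q-1)$, apply Theorem~\ref{th:min} (with Corollary~\ref{cor:bound} for the lower bound and Theorem~\ref{th:d-r} for the parameters), and finish with the known order of $D_d(q)$ and the $q$-binomial identity. The only difference is expository: you spell out the two families of maximal totally isotropic subspaces and the independence/isometry checks by hand, where the paper dispatches these standard facts about $D_d(q)$ by citing Brouwer, Cohen, and Neumaier.
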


\begin{proof}
$J_q(2d,d)$ is an isometric subgraph of $J_q(n,d)$;
$D_d(q)$ is an isometric subgraph
of $J_q(2d,d)$ \cite[p.276]{Brouwer}.
$D_d(q)$ is
a bipartite distance-regular graph
of degree $(q^d -1)/(q-1)$
(the biparticity and the degree are easily retrieved
from the intersection array \cite[Theorem 9.4.3]{Brouwer})
and order $\prod\limits_{i=1}^{d}(q^{d-i}+1)$
\cite[p.274, Lemma 9.4.1]{Brouwer}.
A proof of the identity (\ref{eq:q-trade}) can be found in \cite[Equation (1.87)]{Stanley}.
Since $(q^d -1)/(q-1) = k(M-1)$ with $k=q\gauss{d}{1}\gauss{n-d}{1}$ and $M=\gauss{n-d+1}1$,
the result follows from Theorem~\ref{th:min}.
\end{proof}
\begin{remark}
  It can be found that the $i$th summand
  $S_i=q^{i\choose 2}\gauss{d}{i}$ of the right part of (\ref{eq:q-trade})
  coincides with the number $|D_d(q)_i(x)|$
  of vertices at distance $i$ from a fixed vertex $x$ in $D_d(q)$.
  A
   straightforward way to prove
    this is checking the relation
  $b'_{i-1}S_{i-1} = c'_iS_i$ where $b'_{i}=q^i\gauss{d-i}1$ and $c'_{i}=\gauss{i}1$ are coefficients from the
  intersection array of $D_d(q)$, which can be found in \cite[Theorem 9.4.3]{Brouwer}
  (this relation correspond
   to double-counting
   the edges between
   $D_d(q)_{i-1}(x)$
   and $D_d(q)_i(x)$).
\end{remark}
\begin{remark}
The minimum $\mathrm{T}_q(2,3,n)$ trades
can be considered
as $q$-ary analogs
of the Pasch configuration (quadrilateral).
In particular, the formula $(q+1)(q^2+1) = 15$, $40$, $85$, $156$, ... ($q=2,3,4,5,...$)
for the size of a minimum trade
(which is the half of the size
of a minimum bitrade (\ref{eq:q-trade}))
is satisfied by the Pasch configuration with $q=1$.
As in the case of the Pasch configuration, the graph $J_q(n,3)$, if $n$ is large enough, contains many isomorphic copies
of the minimum $\mathrm{T}_q(2,3,n)$  trade that lie at distance
more than $1$ from each other (so, simple metrical arguments do not forbid them to belong the same $q$-ary Steiner system).
Indeed, each $6$-dimensional subspace of $F_q^n$ corresponds to a subgraph isomorphic to $J_q(6,3)$, which has a subgraph isomorphic to $D_3(q)$.
If two such subspaces have no common $2$-dimensional subspace, then the corresponding subgraphs are mutually independent.
However, the question how many ($0$, $1$, very few, or good many) minimum trades a real $q$-ary Steiner system can include remains untouched.

Another representation
of the $\mathrm{T}_q(2,3,n)$ trades
constructed in the current paper
was announced in \cite{Mog:Maltsev14}.
\end{remark}

\begin{problem}
  The following question is natural: is a minimum Steiner $\mathrm{T}_q(d-1,d,n)$ bitrade unique,
  up to isomorphism of the Grassmann graph?
  As noted in \cite[Remark 9.4.6]{Brouwer},
  in general, the dual polar graph $D_d(q)$ is not unique
  as a distance regular graph with given intersection array.
  The question is if there are nonisomorphic isometric embeddings of such graphs
  into the Grassmann graph.
  Note that the minimum trades from the examples of Section~\ref{sect:known}
  are known to be unique.
\end{problem}

\section*{Acknowledgements}
This research was funded
 by the Russian Science Foundation (grant No 14-11-00555).

\begin{thebibliography}{10}

\bibitem{Aliev:72}
I.~{Sh}.~o. Aliev.
\newblock Combinatorial designs and algebras.
\newblock {\em \href{http://link.springer.com/journal/11202}{Sib. Math. J.}},
  13(3):341--348, 1972.
\newblock \DOI{10.1007/BF00968109}, translated from
  \href{http://www.mathnet.ru/php/journal.phtml?jrnid=smj\&option_lang=eng}{Sib.
  Mat. Zh.} 13(3):499--509, 1972.

\bibitem{AssMat:66}
E.~F. Assmus, Jr and H.~F. Mattson.
\newblock On the number of inequivalent {S}teiner triple systems.
\newblock {\em \href{http://www.sciencedirect.com/science/journal/00219800}{J.
  Comb. Theory}}, 1(3):301--305, 1966.
\newblock \DOI{10.1016/S0021-9800(66)80053-4}.

\bibitem{BHK:2007}
S.~Bang, A.~Hiraki, and J.~H. Koolen.
\newblock Delsarte clique graphs.
\newblock {\em
  \href{http://www.sciencedirect.com/science/journal/01956698}{Eur. J. Comb.}},
  28(2):501--516, 2007.
\newblock \DOI{10.1016/j.ejc.2005.04.015}.

\bibitem{BEOVW:q-Steiner}
M.~Braun, T.~Etzion, P.~R.~J. {\"O}sterg{\aa}rd, A.~Vardy, and A.~Wassermann.
\newblock Existence of $q$-analogs of {S}teiner systems.
\newblock E-print arXiv:1304.1462, arXiv.org, 2013.

\bibitem{BKKL:large}
M.~Braun, M.~Kiermaier, A.~Kohnert, and R.~Laue.
\newblock Large sets of subspace designs.
\newblock E-print arXiv:1411.7181, arXiv.org, 2014.

\bibitem{Brouwer}
A.~E. Brouwer, A.~M. Cohen, and A.~Neumaier.
\newblock {\em Distance-Regular Graphs}.
\newblock Springer-Verlag, Berlin, 1989.

\bibitem{Cav:rev}
N.~J. Cavenagh.
\newblock The theory and application of latin bitrades: A survey.
\newblock {\em \href{http://www.springer.com/mathematics/journal/12175}{Math.
  Slovaca}}, 58(6):691--718, 2008.
\newblock \DOI{10.2478/s12175-008-0103-2}.

\bibitem{HCD}
C.~J. Colbourn and J.~H. Dinitz, editors.
\newblock {\em Handbook of Combinatorial Designs}.
\newblock Discrete Mathematics and Its Applications. Chapman \& Hall/CRC, Boca
  Raton, London, New York, second edition, 2006.

\bibitem{ColMat:Steiner}
C.~J. Colbourn and R.~Mathon.
\newblock Steiner systems.
\newblock In Colbourn and Dinitz \cite{HCD}, pages 102--108.

\bibitem{EV:94}
T.~Etzion and A.~Vardy.
\newblock Perfect binary codes: Constructions, properties and enumeration.
\newblock {\em
  \href{http://ieeexplore.ieee.org/xpl/RecentIssue.jsp?punumber=18}{IEEE Trans.
  Inf. Theory}}, 40(3):754--763, 1994.
\newblock \DOI{10.1109/18.335887}.

\bibitem{FLV:2014}
A.~Fazeli, {Sh}. Lovett, and A.~Vardy.
\newblock Nontrivial $t$-designs over finite fields exist for all $t$.
\newblock {\em \href{http://www.sciencedirect.com/science/journal/00973165}{J.
  Comb. Theory, Ser.~A}}, 127:149--160, 2014.
\newblock \DOI{10.1016/j.jcta.2014.06.001}.
\newblock arXiv:1306.2088

\bibitem{HedKho:trades}
A.~S. Hedayat and G.~B. Khosrovshahi.
\newblock Trades.
\newblock In Colbourn and Dinitz \cite{HCD}, pages 644--648.

\bibitem{HK:q-ary}
O.~Heden and D.~S. Krotov.
\newblock On the structure of non-full-rank perfect $q$-ary codes.
\newblock {\em \href{http://aimsciences.org/journals/amc/}{Adv. Math.
  Commun.}}, 5(2):149--156, 2011.
\newblock \DOI{10.3934/amc.2011.5.149}.
\newblock arXiv:1001.0001

\bibitem{Hoffman:1970}
A.~J. Hoffman.
\newblock On eigenvalues and colourings of graphs.
\newblock In B.~Harris, editor, {\em Graph Theory and Its Applications}, pages
  79--91. Acad. Press, New York, 1970.

\bibitem{Hwang:86}
H.~L. Hwang.
\newblock On the structure of $(v,k,t)$ trades.
\newblock {\em \href{http://www.sciencedirect.com/science/journal/03783758}{J.
  Stat. Plann. Inference}}, 13:179--191, 1986.
\newblock \DOI{10.1016/0378-3758(86)90131-X}.

\bibitem{KK:2008}
R.~K\"otter and F.~R. Kschischang.
\newblock Coding for errors and erasures in random network coding.
\newblock {\em
  \href{http://ieeexplore.ieee.org/xpl/RecentIssue.jsp?punumber=18}{IEEE Trans.
  Inf. Theory}}, 54(8):3579--3591, 2008.
\newblock \DOI{10.1109/TIT.2008.926449}.
\newblock arXiv:cs/0703061

\bibitem{Kro:struct}
D.~S. Krotov.
\newblock On weight distributions of perfect colorings and completely regular
  codes.
\newblock {\em \href{http://link.springer.com/journal/10623}{Des. Codes
  Cryptography}}, 61(3):315--329, 2011.
\newblock \DOI{10.1007/s10623-010-9479-4}.
\newblock 

\bibitem{Kro:2015:N-MDS-Doob}
D.~S. Krotov.
\newblock On the number of maximum independent sets in {D}oob graphs.
\newblock {\em \href{http://semr.math.nsc.ru}{Sib. Ehlektron. Mat. Izv.}}, 12,
  2015.
\newblock To appear.

\bibitem{MWS}
F.~J. MacWilliams and N.~J.~A. Sloane.
\newblock {\em The Theory of Error-Correcting Codes}.
\newblock Amsterdam, Netherlands: North Holland, 1977.

\bibitem{Mog:Maltsev14}
I.~{Yu}. Mogilnykh.
\newblock On minimal switching sets for $q$-analogs of {S}teiner triple
  systems.
\newblock In {\em International Conference ``{M}al'tsev Meeting''. November
  10--13, 2014. Collection of Abstracts}, page~95, Novosibirsk, 2014. Sobolev
  Institute of Mathematics, Novosibirsk State University.
\newblock Online:
  \url{http://www.math.nsc.ru/conference/malmeet/14/Malmeet2014.pdf}.

\bibitem{Ost:2012:switching}
P.~R.~J. {\"O}sterg{\aa}rd.
\newblock Switching codes and designs.
\newblock {\em
  \href{http://www.sciencedirect.com/science/journal/0012365X}{Discrete
  Math.}}, 312(3):621--632, 2012.
\newblock \DOI{10.1016/j.disc.2011.05.016}.

\bibitem{Potapov:2013:trade}
V.~N. Potapov.
\newblock Multidimensional {L}atin bitrades.
\newblock {\em \href{http://link.springer.com/journal/11202}{Sib. Math. J.}},
  54(2):317--324, 2013.
\newblock \DOI{10.1134/S0037446613020146}, translated from
  \href{http://www.mathnet.ru/php/journal.phtml?jrnid=smj\&option_lang=eng}{Sib.
  Mat. Zh.} 54(2):407--416, 2013.
\newblock  arXiv:1104.1295

\bibitem{PotKro:numberQua.en}
V.~N. Potapov and D.~S. Krotov.
\newblock On the number of $n$-ary quasigroups of finite order.
\newblock {\em \href{http://www.degruyter.de/journals/dma/detail.cfm}{Discrete
  Math. Appl.}}, 21(5--6):575--585, 2011.
\newblock \DOI{10.1515/dma.2011.035}, translated from
  \href{http://www.sciencedirect.com/science/journal/0012365X}{Discrete Math.}
  24(1):60--69, 2012.
\newblock arXiv:0912.5453

\bibitem{Siegenthaler:84}
T.~Siegenthaler.
\newblock Correlation-immunity of nonlinear combining functions for
  cryptographic applications.
\newblock {\em
  \href{http://ieeexplore.ieee.org/xpl/RecentIssue.jsp?punumber=18}{IEEE Trans.
  Inf. Theory}}, 30(5):776--780, 1984.
\newblock \DOI{10.1109/TIT.1984.1056949}.

\bibitem{Stanley}
R.~P. Stanley.
\newblock {\em Enumerative Combinatorics, Volume I}, volume~49 of {\em
  Cambridge Studies in Advanced Mathematics}.
\newblock Cambridge Univ. Press, Cambridge, second edition, 2012.

\bibitem{Vas:nongroup_perfect.en}
Yu.~L. Vasil'ev.
\newblock On nongroup close-packed codes.
\newblock In {\em Probleme der Kybernetik}, volume~8, pages 92--95.
  Akademie-Verlag, 1965.
\newblock Translated from Problemy Kibernetiki 8:337--339, 1962.

\bibitem{VorKro:2014en}
V.~K. Vorob'ev and D.~S. Krotov.
\newblock Bounds for the size of a minimal $1$-perfect bitrade in a {H}amming
  graph.
\newblock {\em
  \href{http://link.springer.com/journal/volumesAndIssues/11754}{J. Appl. Ind.
  Math.}}, 9(1):141--146, 2015.
\newblock \DOI{10.1134/S1990478915010159}, translated from
  \href{http://www.mathnet.ru/php/journal.phtml?jrnid=da\&option_lang=eng}{Diskretn.
  Anal. Issled. Oper.} 6(21):3--10, 2014.

\bibitem{ZinZin:2009}
V.~A. Zinoviev and D.~V. Zinoviev.
\newblock On one transformation of {S}teiner quadruple systems {$S(v,4,3)$}.
\newblock {\em \href{http://link.springer.com/journal/11122}{Probl. Inf.
  Transm.}}, 45(4):317--332, 2009.
\newblock \DOI{10.1134/S0032946009040036}, translated from
  \href{http://www.mathnet.ru/php/journal.phtml?jrnid=ppi\&option_lang=eng}{Probl.
  Peredachi Inf.} 45(4):26--42, 2009.

\end{thebibliography}
\providecommand\href[2]{#2} \providecommand\url[1]{\href{#1}{#1}}
  \providecommand\bblmay{May} \providecommand\bbloct{October}
  \providecommand\bblsep{September} \def\DOI#1{{\small {DOI}:
  \href{http://dx.doi.org/#1}{#1}}}\def\DOIURL#1#2{{\small{DOI}:
  \href{http://dx.doi.org/#2}{#1}}}\providecommand\bbljun{June}

\end{document}